\documentclass[12pt]{article}

\def\edo{\end{document}	 }

\usepackage{amsmath, amscd, amsfonts, amssymb, amsthm}

\newtheorem{theorem}{Theorem}[section]
\newtheorem{proposition}[theorem]{Proposition}
\newtheorem{claim}{Claim}

\newtheorem{remark}[theorem]{Remark}

\def\rrd{{\mathbb{R}^d}}
\def\call{{\mathcal{L}}}

\def\calf{{\mathcal{F}}}
\def\calm{{\mathcal{M}}}

\def\cald{{\mathcal{D}}}
\def\calx{{\mathcal{X}}}

\def\call{{\mathcal{L}}}

\def\calp{{\mathcal{P}}}

\def\vsp{\vspace*{1,5mm}\\ }

\def\mk{\medskip }
\def\sk{\smallskip }
\def\n{\noindent }
\def\dd{\displaystyle}

\def\barr{\begin{array}}
\def\earr{\end{array}}

\def\bit{\begin{itemize}}
\def\eit{\end{itemize}}

\def\FP{Fokker--Planck}

\def\1{^{-1}}

\def\rr{{\mathbb{R}}}

\def\9{{\infty}}
\def\lbb{{\lambda}}

\def\ov{\overline}
\def\vf{{\varphi}}

\def\ooo{{\Omega}}
\def\pp{{\partial}}
\def\vp{{\varepsilon}}

\def\ff{\forall }
\def\({\left(}
\def\){\right)}
\def\<{\left<}
\def\>{\right>}

\def\NS{Navier--Stokes}
\def\divv{{\rm div}}
\def\curl{{\rm curl}}
\def\D{{\Delta}}

\title{Wasserstein regularity of vorticity solutions  to~the~2D Navier--Stokes equations} 
\author{Viorel Barbu\thanks{Octav Mayer Institute of Mathematics of  Romanian Academy, Ia\c si, Romania.  Email: vbarbu41@gmail.com}}
\date{}

\begin{document}
\maketitle\vspace*{-4mm}
\begin{abstract}
\n We prove herein the absolute continuity in the Wasserstein metric  $W_p$, $1<p<2$, of mild solutions $u$ to $2D$ Navier--Stokes vorticity equations  $u_t-\nu\Delta u+\divv(K(u)u)=0$ on $(0,\9)\times\rr^2$ with $L^1$-initial data in the Wasserstein space $\mathbb{W}_p(\rr^2)$. Herein, $K$ is the 2D Biot--Savart operator. In this way, the vorticity flow $t\to u(t)$ can be identified with an absolutely continuous curve in the Wasserstein space $\mathbb{W}_p(\rr^2)$ relating so its motion with the Monge--Kantorovich optimal transport problem. One obtains also a time variation formula for the Boltzmann--Gibbs entropy of the vorticity flow. These results extend to  finite Radon measure initial data and are of interest to describing the dyna\-mic of 2D vortices and its connection with optimal transport theory.\sk\\
{\bf MSC:} 60H15, 47H05, 47J05.\\
{\bf Keywords:} \NS\  equation, vorticity, distributional solution, Wasserstein space, optimal transport.  
\end{abstract}
\section{Introduction}\label{s1}
Consider here the 2-D incompressible \NS\ equation
\begin{equation}\label{e1.1}
\barr{ll}
y_t-\nu\D y+(y\cdot\nabla)y=\nabla p&\mbox{in }(0,\9)\times\rr^2,\vsp
\nabla\cdot y=0&\mbox{in }(0,\9)\times\rr^2,\vsp
y(0,x)=y_0(x),\ x\in\rr^2.
\earr\end{equation} \vfill\newpage
\n Let $u=u(t,x)$ denote the vorticity of the velocity field $y=\{y_1,y_2\}$,  
$$u(t,x)=\curl\,y(t,x)=D_1y_2(t,x)-D_2y_1(t,x),\ (t,x)\in(0,\9)\times\rr^2,$$where $D_j=\frac\pp{\pp x_j},\ j=1,2,$ and the symbols  $\nabla,\divv$ refer to spatial derivatives.

Equation \eqref{e1.1} can then be   rewritten as the {\it vorticity equation}
\begin{equation}\label{e1.2}
	\barr{ll}
	u_t-\nu\D u+\divv(yu)=0&\mbox{in }(0,\9)\times\rr^2,\\[1mm]
	u(0,x)=u_0(x)=\curl\,y_0(x),&x\in\rr^2,\earr\end{equation}
while the velocity field $y(t,x)$ can be recovered from the vorticity $u$ via the Biot--Savart formula
\begin{equation}\label{e1.3}
y(t,x)=(\nabla^\bot E*u(t))(x),\ \ff(t,x)\in(0,\9)\times\rr^2,\end{equation}
where $E$ is the fundamental solution of the Laplace operator, i.e.,  $$E(x)=\frac1{2\pi}\,\log|x|,\ x\in\rr^2,$$
and $*$ is the convolution product on $\rr^2$. 
The operator
$$\nabla^\bot E(x)=\frac{(-x_2,x_1)}{2\pi|x|^2},\ x=(x_1,x_2)\in\rr^2\setminus\{0\}$$  
is the Biot--Savart kernel. We set
\begin{equation}\label{e1.5}
K(z)=\nabla^\bot E*z,\ z\in L^m(\rr^2),\ m\in(1,2)\end{equation}
and note (see, e.g., \cite{6}, Lemma 2.2) that
\begin{equation}\label{e1.6}
|K(z)|_{L^q(\rr^2)}\le C|z|_{L^m(\rr^2)},\ \ff z\in L^m(\rr^d), \end{equation}
where $m\in(1,2)$ and $\frac1q=\frac1m-\frac12.$  

Then, we may write \eqref{e1.2} as
\begin{equation}\label{e1.7}
\barr{ll}
u_t-\nu\D u+\divv(uK(u))=0&\mbox{in }(0,\9)\times\rr^2,\vsp
u(0,x)=u_0(x),&x\in\rr^2.\earr\end{equation} 
This is a special case of a so called {\it generalized mean-field \FP\ equation} with locally integrable singular kernel $K$ (see \cite{1aa}, \cite{2}).  

There is an extensive literature on the well-posedness of the vorticity equation \eqref{e1.7} with initial data $u_0$ a Radon measure and, implicitly, on the Navier--Stokes equation \eqref{e1.1} in the spaces $L^p((0,\9)\times\rr^2)$ (see, e.g., \cite{4a}, \cite{5}, \cite{6}, \cite{8}). It should be mentioned also that (see \cite{3}) there is a probabilistic interpretation of equation \eqref{e1.7} in terms of the McKean--Vlasov stochastic dif\-fe\-ren\-tial equation 
\begin{equation}\label{e1.8}
\barr{l}
dX(t)=K(u(t,\cdot))(X(t))dt+\sqrt{2\nu}\ dW(t),\ t\ge0,\vsp 
X(0)=X_0,\earr\end{equation}
defined on a probability space $(\ooo,\calf,\mathbb{P})$ with the normal filtration $(\calf_t)_{t\ge0}$ and 2-D $(\calf_t)$-Brownian motion $W(t)$, where
\begin{equation}\label{e1.9}
u(t,x)dx=\mathbb{P}\circ X(t)\1(dx);\ t>0,\  u_0(dx)=\mathbb{P}\circ X^{-1}_0(dx).\end{equation}
More precisely, if $u$ is a distributional solution to \eqref{e1.7}, then there is a unique strong solution to the stochastic differential equation \eqref{e1.8} such that \eqref{e1.9} holds (see \cite{3}, Theorem 4.5).

If the initial data $u_0$ is the space $\calm(\rr^2)$ of finite Radon measures on $\rr^2$ (in particular, if $u_0\in L^1(\rr^2)$), then there is a mild solution $u$ to the vorticity equation \eqref{e1.7} which is $w^*-\calm(\rr^2)$ continuous on $[0,\9)$, smooth for $t>0$ and for all $r\ge1$ the function $t\to\|u(t)\|_{L^r(\rr^2)}$ is singular of order $t^{\frac1r-1}$ in origin. 
Herein, we make precise the regularity of $u=u(t)$ on $[0,\9)$ by showing that, if $u_0$ is a probability density  or, more generally, a probability measure in the Wasserstein space $W_p(\rr^2)$ with $1<p<2$, then $u(t)\in \mathbb{W}_p=W_p(\rr^2)$, $\ff t>0$, and the function $t\to u(t)$ is absolutely continuous and H\"older--$\-\(1-\frac1p\)$ continuous 
in the Wasserstein metric $W_p$. This result --  which by our knowledge is new in literature -- was obtained by interpreting the vorticity equation \eqref{e1.7} as a continuity equation of the form $u_t+\divv(v u)=0$, with the $L^p_u$ vector field $v=-\nu\nabla(\log u)+K(u)$. It should mention, however, that for the 2D Euler equation such a result in $W_2$ follows directly by Youdovich's work \cite{14} on the existence and uniqueness for $L^\9$-initial data. As it is well known, the Wasserstein topology is weaker than that given by the total variation distance, but it is, however, more appropriate to represent various PDE as gradient flows and it is closely related to the optimal transport theory (\cite{11b}, \cite{10}). 
The Wasserstein distances were already successfully used in fluid dynamics as a weak measure for vortex concentration to describe the dynamics of vorticity flow with the initial state concentrated around a finite number of distinct points in $\rr^2$ (\cite{5a}). 
 
\n{\bf Notations.}  $L^p(\rr^d),\ 1\le p\le\9$ (denoted $L^p$) is the    space of all Lebesgue mea\-su\-rable and $p$-integrable functions on $\rr^d$, with the standard norm $|\cdot|_p$. 
We shall denote by $\calm(\rr^d)$ the space of all finite Radon measures on $\rr^d$ with the total variation norm denoted by $\|\cdot\|_1$. Given a Banach space $\calx$ and $0\le t_0<T\le\9$, we denote by $C([t_0,T];\calx)$ the space of all   continuous $\calx$-valued functions on $[t_0,T]$. For \mbox{$1\le p\le\9$,} we shall denote by $L^p(t_0,T;\calx)$ the space of $\calx$-valued, $L^p$-Bochner integrable functions on $(t_0,T)$. 
If $X_1,X_2$ are two Banach spaces, we shall denote by  $L(X_1,X_2)$ the space of linear continuous operators from $X_1$ to $X_2$. 
By $\calp(\rr^d)$ we denote the set of all probability measures on $\rr^d$ 
$$\calp(\rrd)=\left\{\mu\in \calm(\rrd);\int_\rrd\mu(dx)=1\right\}$$
 and $\calp^a(\rrd)$ is the set of all probability densities on $\rr^d$, that is, 
\begin{equation}\label{e1.10}
	\calp^a(\rr^d)=\left\{\rho\in L^1(\rr^d);\rho\ge0,
	\mbox{ a.e. in }\rr^d;\int_{\rr^d}\rho(x)dx=1\right\}.
	\end{equation}
The Wasserstein space of order $p$ on $\rrd$, $d\ge1$, denoted by  $\mathbb{W}_p(\rrd)$ is the~set
\begin{equation}\label{e1.11}
\calp_p(\rrd)=\left\{\mu\in\calp(\rrd);\int_\rrd|x|^p\mu(dx)<\9\right\}\end{equation}
equipped with the distance
\begin{equation}\label{e1.12}
\barr{ll}
W_p(\mu,\nu)\!\!\!
&\dd=\(\inf_{\lbb\in\Pi(\mu,\nu)}
\int_\rrd|x-y|^pd\lbb(x,y)\)^{\frac1p}\vsp
&=\dd\inf\left\{\mathbb{E}\(|X-Y|^p\)^{\frac1p};\ \call_X=\mu,\ \call_Y=\nu\right\}.\earr
\end{equation}
Herein, $\Pi(\mu,\nu)$ is the set of all probability measures $\lbb$ on $\rrd\times\rrd$ with first mar\-ginal $\mu$ and second marginal $\nu$, that is, $\lbb(B{\times}\rrd){=}\mu(B)$, \mbox{$\lbb(\rrd{\times} B){=}\nu(B)$} for each Borelian set $B\subset\rrd$; $\mathbb{E}$ stands for expectation and $X,Y$ are random  variables with the laws $\call_X$ and $\call_Y$, respectively. For $p=1$, the Wasserstein distance $W_1$ is derived from the {\it optimal transport problem} (the {\it Monge--Kantorovich problem}) and became very popular in the last decades for its applications in statistical mechanics, stochastic analysis and theory of gra\-dient flows. (See \cite{11b}, \cite{10}.) It should be mentioned that the convergence of a sequence $\{\mu_n\}\subset \mathbb{W}_p(\rrd)$  is the usual weak$^*$-convergence in $\calm(\rrd)$ plus the convergence of $p$-order moments $\left\{\int_\rrd|x|^p\mu_n(dx)\right\}$ which implies the tightness of $\{\mu_n\}$. The distance $W_p(\mu,\nu)$ can be interpreted also as a weak measure of concentration of measure $\mu$ with respect to a given measure $\nu$, a concept with deep implications in fluid dynamics.

Given a function (curve) $u:[t_0,T]\to\mathbb{W}_p(\rrd)$, one defines the {\it speed} (or~{\it metric derivative}) of $u$ at $t$ as (see \cite{9})
\begin{equation}\label{e1.11a}
	|u'|(t)=\lim_{\vp\to0}\frac1{|\vp|}\,{W}_p(u(t+\vp),u(t)),\ t\in[t_0,T],\end{equation}
provided this limit exists. 

The function $u:[t_0,T]\to\mathbb{W}_p(\rrd)$ is said to be {\it absolutely continuous} if there is $g\in L^1(t_0,T)$ such that $W_p(u(t),u(s))\le\int^t_sg(\tau)d\tau$, for all $t_0\le s<t\le T$. We shall denote by $AC([t_0,T];W_p)$ the space of absolute continuous functions $u:[t_0,T]\to\mathbb{W}_p(\rrd)$. The function $u$ is said to be {\it H\"older$-q$ continuous} on $[t_0,T]$ if $W_p(u(t),u(s))\le C|t-s|^q$, for all $t_0<s\le t\le T$. We also note that, if $u\in AC([t_0,T];\mathbb{W}_p(\rrd))$, then $|u'|\in L^1(t_0,T)$ (see \cite{1}, p.~24). Taking into account that $W_p(u(t),u(t+h))$ represents the minimal cost for moving the measure $u(t)dx$ to $u(t+h)dx$, the above concepts of continuity for the function $t\to u(t)$ in the space $\mathbb{W}_p(\rrd)$ should be interpreted in this meaning, that is, of optimal transport processes.

Given a probability density $\rho\in\calp^a(\rrd)$, its {\it Boltzmann--Gibbs entropy}, also called {\it continuous entropy} of $\rho$, is defined as
\begin{equation}\label{e1.13}
S(\rho)=-\int_\rrd\rho(x)\log\rho(x)dx.
\end{equation}
In the following, we shall mainly use definition \eqref{e1.12} in the special case where $d=2$ and $\mu$ and $\nu$ are probability densities on $\rr^2$, that is $\mu=\rho_1dx$, $\nu=\rho_2dx$, where $\rho_1,\rho_2\in\calp^a(\rr^2)$.

\section{Mild solutions to the vorticity equation \eqref{e1.7}}\label{s2}
\setcounter{equation}{0}

We shall briefly recall herein the basic existence results for mild solutions  of equation \eqref{e1.7} 
with the main reference \cite{6} (see, also, \cite{4a}, \cite{5}, \cite{8}).

A function $u\in L^{r_1}_{\rm loc}(0,\9;L^{r_2}),\ r_1,r_2\ge 1$, is called a {\it mild solution to \eqref{e1.7}} if it satisfies the integral equation 
\begin{equation}\label{e2.1}
u(t)=e^{\nu t\Delta}u_0-\divv\int^t_0 e^{\nu(t-s)\D}(K(u(s))u(s))dx,\ t>0,\end{equation} 

\n where $e^{t\D}$ is the heat semigroup in $\rr^2$, which is well defined on all $L^p$,\break $1\le p\le\9$. (If $u_0\in\calm(\rr^2)$, then $\|e^{t\D}u_0\|_{L^p}\le C t^{-1+\frac1p}\|u_0\|_1,$ $\ff t>0$, for all $p>1.$) 

\begin{proposition}\label{p2.1} 
	Assume that $u_0\in\calm(\rr^2)$. Then, equation \eqref{e1.7} has a unique mild solution $u:[0,\9)\to\calm(\rr^2)$ which is $t$-continuous on $[0,\9)$ in the $w^*$-topology of $\calm(\rr^2)$ and satisfies
\begin{itemize}
\item[\rm(i)] $u\in C([0,\9);L^q)$ for $1\le q<\9$,
\begin{equation}\label{e2.2}
|u(t)|_r\le C_rt^{-1+\frac1r}\|u_0\|_1,\ \ff t>0,\ r\in[0,\9).
\end{equation} 	
\end{itemize}
Moreover, the solution $u$ is smooth for $t>0$,
\begin{equation}\label{e2.3}
|D^k_t\,D^j_xu(t,x)|_r\le C_r\,t^{\frac1r-k-\frac j2-1}\|u_0\|_1,\ \ff(t,x)\in(0,\9)\times\rr^2, 
\end{equation} 
for all $r\in[1,\9]$ and $k,j=0,1,...,$ and
\begin{equation}
\label{e2.3a}
\int_{\rr^2}u(t,x)dx=\int_{\rr^2}u_0(dx),\ \ff t>0.
\end{equation}
\end{proposition}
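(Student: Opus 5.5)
We construct the solution by a fixed point argument for the integral equation \eqref{e2.1} in a time-weighted space, following the scheme of \cite{6}. Fix $r\in(1,2)$ (e.g. $r=4/3$) and let $\calx_T$ be the space of $u\in C((0,T];L^r)$ with norm $\sup_{0<t\le T}t^{1-\frac1r}|u(t)|_r$. The heat estimates $|e^{\nu t\D}u_0|_r\le Ct^{-1+\frac1r}\|u_0\|_1$ and $|\nabla e^{\nu t\D}f|_{s}\le Ct^{-\frac12-(\frac1m-\frac1s)}|f|_m$ handle the linear and Duhamel parts. For the nonlinear term we combine \eqref{e1.6} with Hölder's inequality: $|K(u)u|_m\le |K(u)|_q|u|_r\le C|u|_r^2$ with $\frac1m=\frac1q+\frac1r=\frac2r-\frac12$. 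This gives a bilinear estimate
$$\Big|\divv\int_0^te^{\nu(t-s)\D}(K(u)u)(s)ds\Big|_r\le C\int_0^t(t-s)^{-\frac12-(\frac1r-\frac12)}s^{-2(1-\frac1r)}ds\ \|u\|^2_{\calx_T},$$
and the integral equals $Ct^{\frac1r-1}$ by scaling, with a Beta function that is finite because $r<2$ and $r>1$. Hence the map is a contraction on a small ball. The ball is small provided either $\|u_0\|_1$ is small, or, more generally, $\sup_{t\le T}t^{1-\frac1r}|e^{\nu t\D}u_0|_r$ is small. The latter holds as $T\to0$ when $u_0\in L^1$, and also for measures with small atomic part.

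\textbf{Main obstacle: large data and measures.} For general $u_0\in\calm(\rr^2)$ with large atoms the smallness fails, and this is precisely the delicate point treated in \cite{6} (Gallay--Wayne/Gallay--Gallagher type arguments). I would invoke \cite{6} for existence and uniqueness in that case rather than reprove it. For $L^1$ data I would argue directly as follows:
\begin{itemize}
\item local existence follows from the contraction above;
\item global existence follows from the a priori bound $|u(t)|_1\le|u_0|_1$, obtained by testing with $\mathrm{sign}\,u$ and using $\divv K(u)=0$;
\item the $L^1$ bound, combined with the standard Nash/energy argument (test with $|u|^{r-2}u$, use $\divv K(u)=0$ and the Gagliardo--Nirenberg inequality), yields \eqref{e2.2} globally.
\end{itemize}
For measures we then approximate by $u_0^\vp=u_0*\rho_\vp$, which satisfy $\|u_0^\vp\|_1\le\|u_0\|_1$. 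The bounds \eqref{e2.2} are uniform in $\vp$, so compactness gives a limit that is a mild solution; uniqueness is again taken from \cite{6}.

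\textbf{Regularity and mass conservation.} Smoothness and \eqref{e2.3} follow by bootstrapping in \eqref{e2.1} from a time $t/2$: write $u(t)=e^{\nu\frac t2\D}u(t/2)-\divv\int_{t/2}^t\cdots$ and differentiate. The parabolic scaling $t^{-\frac j2}$ per spatial derivative and $t^{-k}$ per time derivative, where time derivatives are converted into spatial ones via the equation, gives the stated exponents; the argument is an induction on $j+k$. Continuity in $L^q$ for $t>0$ follows from these bounds. The $w^*$-continuity at $t=0$ holds because the Duhamel term tested against $\vf\in C_0$ is $O(t^{1/2}\cdot t^{\frac1r-1}\cdots)\to0$, which must be checked by integrating the derivative onto $\vf$, while $e^{\nu t\D}u_0\to u_0$ weakly$^*$. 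Finally, \eqref{e2.3a} follows by integrating \eqref{e2.1} over $\rr^2$: the heat semigroup preserves mass, and the divergence term integrates to zero since $K(u)u\in L^1$ for $t>0$ (from $|K(u)|_q|u|_{q'}$).
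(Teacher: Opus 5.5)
You take a genuinely different route from the paper, and it is essentially correct in outline. The paper does not actually prove Proposition~\ref{p2.1}. It takes existence for measure data from Giga--Miyakawa--Osada \cite{6}, and uniqueness from \cite{6} (small atomic part) and from Gallagher--Gallay (general case). It then reads off \eqref{e2.2}--\eqref{e2.3a} from the representation \eqref{e2.7} with the Gaussian bounds \eqref{e2.9}: for instance $|u(t)|_r\le C_3t^{-1}\|e^{-C_4|\cdot|^2/t}\|_{L^r}\|u_0\|_1=Ct^{-1+\frac1r}\|u_0\|_1$, with mass conservation coming from $\int\Gamma\,dx=1$.

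You instead reprove the accessible part yourself:
\begin{itemize}
\item a Kato-type contraction in the weighted $L^r$ space;
\item the Carlen--Loss/Nash decay for \eqref{e2.2}, which, since $\divv K(u)=0$, even gives a constant independent of $u_0$;
\item mollification and compactness to pass to measure data;
\item a parabolic bootstrap for \eqref{e2.3};
\item $w^*$-continuity and \eqref{e2.3a} read directly off the mild formulation.
\end{itemize}
For uniqueness with large atoms you cite the literature, as the paper does. Your route is more self-contained and makes the behaviour at $t=0$ explicit. The paper's route additionally yields two-sided pointwise kernel bounds and positivity ($\Gamma\ge0$). The paper reuses these later, in Claim~\ref{claim1}, the moment bound $M_p(u(t))\le C(M_p(u_0)+t^{p/2})$ and the atomic example, and your argument does not produce them.

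Some points to tighten:
\begin{enumerate}
\item The H\"older step needs $m\ge1$, i.e.\ $\frac2r-\frac12\le1$. So $r\in[\frac43,2)$, not all of $(1,2)$.
\item The bound $|u(t)|_1\le|u_0|_1$ cannot by itself give global existence. $L^1$ is scaling-critical, so your contraction time depends on the profile of the data (how fast $t^{1-\frac1r}|e^{\nu t\D}u_0|_r$ becomes small), not on its norm. What closes the continuation is the subcritical bound $|u(t)|_r\le Ct^{-1+\frac1r}|u_0|_1$ from the Nash argument on the maximal interval. With it you restart at $t_0>0$ with data in $L^1\cap L^r$, whose existence time is controlled by $|u(t_0)|_r$. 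So your second bullet actually depends on your third.
\item The contraction gives uniqueness only inside the small ball of $\calx_T$. A solution in the class of the proposition only has $\sup_{t\le T}t^{1-\frac1r}|u(t)|_r$ bounded, not small. Even for $L^1$ data you therefore need one of two things: an argument that $t^{1-\frac1r}|u(t)|_r\to0$ as $t\to0$ for every such solution, using $u\in C([0,T];L^1)$ as in Brezis's uniqueness proof; or the Gallagher--Gallay theorem. Gallagher--Gallay, not \cite{6}, is the correct reference for large atoms.
\item Reading (i) as continuity on $(0,\infty)$ is the right interpretation for measure data. The constants in \eqref{e2.3} necessarily depend on $\|u_0\|_1$, as your bootstrap produces, because of the quadratic nonlinearity.
\end{enumerate}
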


\n The existence part of Proposition \ref{p2.1} was proved by Giga et al. \cite{6} for initial data $u_0$ in $\calm(\rr^2)$ and the uniqueness for $u_0$ with the sufficiently small atomic part $(u_0)_{pp}$. This smallness  condition for uniqueness was removed later by I.~Gallagher and T.~Gallay. In particular, for $u_0\in :L^1$ there is a unique mild solution which satisfies 
\begin{eqnarray}
u\in C(0,\9;L^q),&&\hspace*{-5mm}\ff q\ge1,\label{e2.4}\\[1mm]
|u(t)|_r\le C_r\,t^{\frac1r-1}\,|u_0|_1,&&\hspace*{-5mm}\ff t\ge0,\ r\ge1.\label{e2.5}
\end{eqnarray}	

Moreover, by \eqref{e2.3a} we see that if $u_0\in\calp^a(\rr^2)$, then 
\begin{equation}\label{e2.6}
u(t)\in\calp^a(\rr^2),\ t\ge0,
\end{equation}
and this extends to all $u_0\in\calp(\rr^2)$. 

For $u_0\in\calm(\rr^2)$, the solution $u$ to \eqref{e1.7}  expressed as (see Theorem 4.3 in \cite{6})
\begin{equation}\label{e2.7}
	u(t,x)=\int_{\rr^2}\Gamma(t,x;0,y)(u_0)(dy),\ (t,x)\in(0,\9)\times\rr^2,
\end{equation}
where the function $\Gamma\equiv\Gamma_v(t,x;s,\xi)$ is the fundamental solution to the linear parabolic operator
$$L_v(u)=u_t-\nu\Delta u+(u\cdot\nabla)v,\ (t,x)\in(0,\9)\times\rr^2.$$\newpage
We have
\begin{equation}\label{e2.8}
\barr{c}
\Gamma_v\ge0;\ \dd\int_{\rr^2}\Gamma_v(t,x;s,\xi)d\xi=1,\
\dd\int_{\rr^2}\Gamma_v(t,x;s,\xi)dx=1,\vsp
\hfill 0\le s\le t<\9,\ x\in\rr^2,\vsp 
\dd\lim_{t\downarrow s}\int_{\rr^2} \Gamma_v(t,x;s,\xi)f(\xi)d\xi=f(x),\ \ff f\in C_b(\rr^2).\earr\end{equation}
\begin{equation}\label{e2.9}
	\barr{l}
	C_1(t-s)\1\exp(-C_2|x-y|^2(t-s)\1)
	\le\Gamma(t,x;s,y)\vsp 
	\qquad\le C_3(t-s)\1\exp(-C_4|x-y|^2(t-s)\1),\ t\ge s\ge 0,\earr
\end{equation}
where $C_j,\ j=1,2,3,4$, depends of $\|u_0\|_1$. 

It should be mentioned that the estimates \eqref{e2.2}--\eqref{e2.3}, as well as the conservation of the vorticity formula \eqref{e2.3a}, follow directly by \eqref{e2.7}--\eqref{e2.9}.

Everywhere in the following, by {\it solution to the vorticity equation}  \eqref{e1.7} we mean a mild solution  $u:[0,\9)\times\rr^2\to\rr$, which has the properties mentioned in Proposition \ref{p2.1}, that is,   $t$-continuous in the $w^*$--topology of $\calm(\rr^2)$ and which  satisfies \eqref{e2.2}--\eqref{e2.3a} (\eqref{e2.4}--\eqref{e2.5}, respectively, if $u_0\in L^1$).

\section{The main result}\label{s3}
\setcounter{equation}{0}

Given $u_0\in\calp^a(\rr^2)$, we set
\begin{equation}\label{e3.1}
I(u_0)=\int_{\rr^2}u_0(x)\log(u_0(x))dx=-S(u_0),
\end{equation}
where $S(u_0)$ is the Boltzmann--Gibbs entropy \eqref{e1.13}. We note that the function $I:L^1\to \,]-\9,+\9]$ is convex and $\not\equiv+\9$. Moreover, by Lemma A1 in Appendix, we have for $\alpha\in\(\frac{p+2}2,1\)$,
\begin{equation}\label{e3.2}
I(u_0)\ge-C_\alpha(M_p(u_0)+1)^\alpha,\ \ff u_0\in\calp_p(\rr^2)\cap \calp^a(\rr^2),\end{equation}
where $M_p$ is the moment of order $p$, i.e.,
\begin{equation}\label{e3.3}
M_p(u_0)=\int_{\rr^2}u_0(x)|x|^pdx.\end{equation}

\begin{theorem}\label{t3.1} Let $u_0\in\calp^a(\rr^2)$ be such that
\begin{equation}\label{e3.4}
u_0\in\calp_p(\rr^2),\ |I(u_0)|<\9,\end{equation}
where $1<p<2$. Let $u$ be the  solution to the vorticity equation \eqref{e1.7}. Then, 
\begin{eqnarray}
&u(t)\in \mathbb{W}_p(\rr^2),\ \ff t\ge0,\label{e3.5}\\[1mm]
&u\in AC([0,T];\mathbb{W}_p(\rr^2)),\ \ff T>0.\label{e3.5a}
\end{eqnarray}
Moreover, we have, for $\alpha\in\(\frac{p-2}2,1\)$,
\begin{eqnarray}
\hspace*{-8mm}&\dd\int^T_0\!\!(|u'|(t))^p dt\le C_T\((|I(u_0)|)^{\frac p2}+(M_p(u_0)+1)^{\frac{\alpha p}2}+|u_0|^{p+1}_1\),\ \ff T>0,\quad\label{e3.6}\\
\hspace*{-8mm}&W_p(u(t),u(s))\le(t-s)^{\frac1{p'}}
\(\dd\int^t_s(|u'|(\tau))^pd\tau\)^{\frac1p},\ 0\le s\le t<\9,\label{e3.7}\\[1mm]
\hspace*{-8mm}&\dd\frac d{dt}\,I(u(t))+\nu\int_{\rr^2}\frac{|\nabla u(t,x)|^2}{u(t,x)}\,dx\le0,\ \mbox{a.e. }t>0,\label{e3.8}\\
\hspace*{-8mm}&\dd\frac d{dt}\,S(u(t))\ge4\pi\exp(-S(u(t))),\mbox{ a.e }t>0.\label{e3.9} 
\end{eqnarray}
\end{theorem}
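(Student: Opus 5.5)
The plan is to read \eqref{e1.7} as a continuity equation $u_t+\divv(vu)=0$ with velocity field $v=-\nu\nabla\log u+K(u)$, and then apply the Ambrosio--Gigli--Savar\'e characterization of absolutely continuous curves in $\mathbb{W}_p$ (\cite{1}, Theorem 8.3.1). That result says the following: if $\int_0^T\int_{\rr^2}|v|^pu\,dx\,dt<\9$ and the curve is narrowly continuous, then $u\in AC([0,T];\mathbb{W}_p)$ and $|u'|(t)\le\|v(t)\|_{L^p_{u(t)}}$ for a.e. $t$. Once $|u'|\in L^p(0,T)$ is known, \eqref{e3.7} follows from $W_p(u(t),u(s))\le\int_s^t|u'|\,d\tau$ and H\"older's inequality. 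So everything reduces to three things: finiteness of moments, which gives \eqref{e3.5}; an $L^p_u$ bound on $v$, which gives \eqref{e3.6}; and the entropy identities \eqref{e3.8}--\eqref{e3.9}.

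\textbf{Moments.} First, $u(t)\ge0$ with unit mass by \eqref{e2.6}. By the representation \eqref{e2.7} together with the Gaussian upper bound \eqref{e2.9}, we get $M_p(u(t))\le C(M_p(u_0)+t^{p/2}+1)$, which is \eqref{e3.5}. Alternatively, one can test \eqref{e1.7} against $(1+|x|^2)^{p/2}$ and use that $K(u)$ lies in $L^q$ via \eqref{e1.6} and \eqref{e2.5}.

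\textbf{Entropy and the bound on $v$.} Next, multiply \eqref{e1.7} by $1+\log u$, with the computation done for $t\ge\vp>0$ where $u$ is smooth and positive by \eqref{e2.3} and \eqref{e2.9}. The transport term vanishes, since $\int\divv(uK(u))\log u=-\int K(u)\cdot\nabla u=\int u\,\divv K(u)=0$ because $\divv K(u)=0$. The result is the entropy identity $\frac d{dt}I(u)+\nu\int|\nabla u|^2/u=0$, which gives \eqref{e3.8}; the inequality form survives the passage $\vp\to0$ by lower semicontinuity. Integrating on $(0,T)$ and using the lower bound \eqref{e3.2} for $I(u(T))$ together with the moment estimate yields
\[
\nu\int_0^T\!\!\int_{\rr^2}\frac{|\nabla u|^2}{u}\,dx\,dt\le I(u_0)+C_\alpha(M_p(u(T))+1)^\alpha .
\]
Since $p<2$ and $u$ has unit mass, H\"older's inequality gives $\int|\nabla\log u|^pu\le(\int|\nabla u|^2/u)^{p/2}$. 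Integrating in time and applying H\"older once more produces the terms $|I(u_0)|^{p/2}+(M_p+1)^{\alpha p/2}$ in \eqref{e3.6}.

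For the transport part, write $\int|K(u)|^pu\le|K(u)|_{q}^p|u|_{s}$ with $\frac pq+\frac1s=1$. Then \eqref{e1.6} and \eqref{e2.5} give $|K(u)|_q\le C|u|_m\le Ct^{1/m-1}$, so $\int|K(u)|^pu\le Ct^{-\gamma}$. The obstacle here is choosing exponents so that $\gamma<1$, making the bound integrable at $t=0$. Taking $s$ close to $1$ and $m$ close to $1$, with $q=2m/(2-m)$ forced to be large enough, the exponent becomes $\gamma=p(1-1/m)+(1-1/s)$. Pushing toward $m\to1$ requires $q\approx2$, so $p/q<1$ needs roughly $p<2$, and a careful choice such as $m$ slightly above $2p/(p+2)\cdot$(adjusted) should work for $p<2$. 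If this exponent balancing fails near $t=0$, I would instead bound $|K(u)|_\9\le C|u|_1^{1/2}|u|_\9^{1/2}$, which is integrable to power $p$ only for $p<2$, and this is exactly the constraint in the theorem. The powers of $|u_0|_1$ arising from these estimates give the term $|u_0|_1^{p+1}$.

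\textbf{Isoperimetric bound.} Finally, for \eqref{e3.9}, use $\frac d{dt}S=\nu\int|\nabla u|^2/u$ together with Stam's log-Sobolev/Fisher information inequality in $\rr^2$, namely $\int|\nabla u|^2/u\ge 4\pi e\,e^{-S(u)}$. Here $\exp(S)$ is the entropy power (with $d=2$), and the constant absorbs $\nu$.
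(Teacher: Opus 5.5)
Your route is the paper's own: the continuity-equation form with $v=-\nu\nabla\log u+K(u)$ and the Ambrosio--Gigli--Savar\'e criterion, the Gaussian moment bound (Claim~\ref{claim1}), entropy dissipation plus H\"older (Claims~\ref{claim2}--\ref{claim3}), a $t^{-p/2}$ bound on $\int|K(u)|^pu\,dx$ (Claim~\ref{claim4}), and log-Sobolev. There is a genuine gap, and it sits exactly where the hypothesis $|I(u_0)|<\infty$ has to be used. Working on $t\ge\varepsilon>0$ you get $I(u(T))+\nu\int_\varepsilon^T\!\int_{\mathbb{R}^2}|\nabla u|^2/u\,dx\,dt=I(u(\varepsilon))$. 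To let $\varepsilon\to0$ you need the \emph{upper} bound $\limsup_{\varepsilon\to0}I(u(\varepsilon))\le I(u_0)$. Lower semicontinuity of $I$ gives only $I(u_0)\le\liminf_{\varepsilon\to0}I(u(\varepsilon))$, which is the wrong direction. So as written your argument gives no control of the Fisher information near $t=0$. Hence you get no bound on $\int_0^T\!\int|v|^pu$ and no absolute continuity on $[0,T]$, only on $[\delta,T]$ as in Theorem~\ref{t3.2}.

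This step carries real content: for Dirac data (the Lamb--Oseen vortex) $I(u(\varepsilon))\to+\infty$ and $\int_0^T\!\int|\nabla u|^2/u\,dx\,dt=+\infty$. It is easily repaired. By \eqref{e2.7}--\eqref{e2.8}, $\Gamma(\varepsilon,x;0,\cdot)$ is a probability density and $\int\Gamma(\varepsilon,x;0,y)\,dx=1$. Jensen's inequality for $r\mapsto r\log r$ then gives $u(\varepsilon,x)\log u(\varepsilon,x)\le\int\Gamma(\varepsilon,x;0,y)\,u_0(y)\log u_0(y)\,dy$. Integrating in $x$ yields $I(u(\varepsilon))\le I(u_0)$; this is legitimate since $u_0\log u_0\in L^1$ by Lemma A.1 and $|I(u_0)|<\infty$. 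Monotone convergence then gives $I(u(T))+\nu\int_0^T\!\int|\nabla u|^2/u\le I(u_0)$. The paper instead integrates the regularized identity \eqref{e4.8} from $t=0$ with the cut-offs $\varphi_n$. You also need those cut-offs, or some decay argument, to discard boundary terms in your whole-plane integrations by parts.

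Two smaller points. First, no exponent tuning is needed for the transport term. With $\frac pq+\frac1s=1$ and $\frac1q=\frac1m-\frac12$ one gets $\gamma=p(1-\frac1m)+(1-\frac1s)=\frac p2$ for every $m\in(1,2)$, which is precisely Claim~\ref{claim4}; your $L^\infty$ fallback gives the same rate. Second, for \eqref{e3.9}, log-Sobolev yields $\frac d{dt}S\ge4\pi e\nu\exp(-S)$, which implies the stated constant $4\pi$ only when $e\nu\ge1$. For the Gaussian (Lamb--Oseen) solution $\frac d{dt}S=4\pi e\nu\exp(-S)$ holds with equality. So ``the constant absorbs $\nu$'' does not give \eqref{e3.9} as written when $\nu<1/e$; the paper's proof has the same imprecision.
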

\n Here, $|u'|(t)$ is the speed (metric derivative) of the curve $u:[0,\9)\to\mathbb{W}_p(\rr^2)$ (see \eqref{e1.11a}),  and $\frac1{p'}=1-\frac1p$.

 Theorem \ref{t3.1} extends to  solutions to \eqref{e1.7} with probabilistic measures initial data $u_0$. Namely, we have 
 \begin{theorem}\label{t3.2} Let $u_0\in\calp_p(\rr^2)\cap\calm(\rr^2)$, $1<p<2$,  
 	and let $u$ be a  solution to the vorticity equation \eqref{e1.7}. Then, for all $0<\delta<T<\9$,
 	
 \begin{equation}\label{e3.11}
 	u\in AC([\delta,T];\mathbb{W}_p(\rr^2))\end{equation}
 and $u=u(t)$ is H\"older$-\frac1{p'}$ continuous on $[\delta,T]$ in the Wasserstein metric $W_p$. Moreover, 
 \eqref{e3.7}--\eqref{e3.9} hold on every interval $[\delta,T]$.
 \end{theorem}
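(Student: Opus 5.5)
The idea is to reduce Theorem \ref{t3.2} to Theorem \ref{t3.1} by restarting the flow at a positive time. Fix $0<\delta<T$ and any $0<\delta_0<\delta$, and set $\tilde u_0=u(\delta_0)$. By Proposition \ref{p2.1}, $u(\delta_0)$ is smooth and, by \eqref{e2.6}, $u(\delta_0)\in\calp^a(\rr^2)$. By \eqref{e2.5}, $u(\delta_0)\in L^1\cap L^\9$. By uniqueness of mild solutions (Gallagher--Gallay), $\tilde u(t)=u(t+\delta_0)$ is the solution of \eqref{e1.7} with initial datum $\tilde u_0$. So it is enough to check the hypotheses \eqref{e3.4} for $\tilde u_0$, apply Theorem \ref{t3.1} on $[0,T-\delta_0]$, and translate back in time. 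This gives \eqref{e3.11} on $[\delta_0,T]\supset[\delta,T]$, together with \eqref{e3.7}--\eqref{e3.9} there.

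\textbf{Step 1: $M_p(u(\delta_0))<\9$.} Use the representation \eqref{e2.7} and the Gaussian upper bound \eqref{e2.9}:
$$M_p(u(\delta_0))\le\int_{\rr^2}\int_{\rr^2}C_3\delta_0^{-1}e^{-C_4|x-y|^2/\delta_0}|x|^p\,dx\,u_0(dy).$$
Write $|x|^p\le 2^{p}(|x-y|^p+|y|^p)$. The inner integral is then bounded by $C(\delta_0^{p/2}+|y|^p)$, so $M_p(u(\delta_0))\le C(\delta_0^{p/2}+M_p(u_0))<\9$.

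\textbf{Step 2: $|I(u(\delta_0))|<\9$.} For the upper bound, $u\log u\le u\log^+ u\le u\,|u(\delta_0)|_\9$, and $|u(\delta_0)|_\9\le C\delta_0^{-1}$ by \eqref{e2.2}, so $I(u(\delta_0))\le C\delta_0^{-1}<\9$. For the lower bound, \eqref{e3.2} together with Step 1 gives $I(u(\delta_0))\ge -C_\alpha(M_p(u(\delta_0))+1)^\alpha>-\9$. One could instead use the Gaussian lower bound in \eqref{e2.9} to control $\log u$ from below, but \eqref{e3.2} suffices.

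\textbf{Step 3: conclusion.} Theorem \ref{t3.1} now yields $u\in AC([\delta_0,T];\mathbb{W}_p)$ and $\int_{\delta_0}^T(|u'|)^p\,dt<\9$, with the bound \eqref{e3.6} expressed in terms of $I(u(\delta_0))$, $M_p(u(\delta_0))$ and $|u(\delta_0)|_1=1$. Hölder continuity of order $\frac1{p'}$ on $[\delta,T]$ then follows from \eqref{e3.7}:
$$W_p(u(t),u(s))\le (t-s)^{1/p'}\Big(\int_{\delta_0}^T(|u'|)^p\,d\tau\Big)^{1/p}.$$
The inequalities \eqref{e3.8} and \eqref{e3.9} hold for a.e.\ $t>\delta_0$, and in particular on $[\delta,T]$.

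\textbf{Main obstacle.} The delicate step is the well-posedness input: the Gaussian bounds \eqref{e2.9} and uniqueness must hold for measure data $u_0$ with atomic parts. These are part of the recalled results, Proposition \ref{p2.1} and \eqref{e2.7}--\eqref{e2.9}, and everything else reduces to moment and entropy bookkeeping. If one only wants the semigroup property, it can also be obtained directly from the mild formulation \eqref{e2.1} restarted at $\delta_0$, without invoking uniqueness.
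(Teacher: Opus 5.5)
Your proposal is correct and follows essentially the same route as the paper. Both restart the flow at a positive time, check via the Gaussian representation (as in Claim~\ref{claim1}) that the restarted datum has finite $p$-th moment, check via the smoothing bounds \eqref{e2.2} together with Lemma~A.1 (the paper's Remark~\ref{r5.1}) that its entropy is finite, and then apply Theorem~\ref{t3.1}. The differences are cosmetic: you restart at $\delta_0<\delta$ rather than exactly at $\delta$, you bound the positive part of the entropy by $|u(\delta_0)|_\9$ rather than by an $L^{1+\alpha}$ norm, and you state explicitly the uniqueness/semigroup step that the paper leaves implicit.
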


 Theorem \ref{t3.2} follows by Theorem \ref{t3.1} by the smoothing effect of the vor\-ti\-city flow $t\to u(t)$ mentioned in Proposition \ref{p2.1}, part \eqref{e2.3}, but the complete proof will be given in Section \ref{s4}. A typical example herein is $u_0=\delta_{x_0}$, 
 which generates a unique solution $u=\frac1{4\pi t}\exp\(-\frac{|x-x_0|^2}{4t}\)$ to \eqref{e1.7}, the so called {\it Lamb--Oseen vortex} (see, e.g., \cite{5}).
 
 One might ask if Proposition \ref{p2.1} remains correct  for $u_0\in\calm(\rr^2)\cap \calp_p(\rr^2)$. However, a simple calculation shows that, if $u_0$ is an atomic measure of the form  $u_0=\sum\limits^N_{j=1}a_j\delta_{x^0_j}$ where $a_j\ge0$, $\sum\limits^N_{j=1}a_j=1$ and $N>1$, this is not true. Indeed, if $u$ is the corresponding solution to \eqref{e1.7}, that is the vorticity starting from the joint-vortex measure, we have by \eqref{e2.7}--\eqref{e2.8}
$$\barr{ll}
W_p^p(u(t),u_0)\!\!\!
 &=\dd\sum^N_{j=1}a_j\int_{V_j}u(t,x)|x-x^0_j|^pdx\vsp 
 &=\dd\sum^N_{j,k=1}a_ja_k\int_{V_j}|x-x^0_j|^p\Gamma(t,x;0,x^0_k)dx\vsp
 &=t\dd\sum^N_{j,k=1}a_ja_k\int_{V_j}|x^0_k-x^0_j+\sqrt{t}\,z|^p\Gamma(t,x_k+\sqrt{t}\,z;0,x^0_k)dz.\earr$$
 Here, $V_j$ are Voronoi power cells of the form
 $$V_j=\left\{x\in\rr^2;\frac1p|x-x_j|^p-\psi_j\le\frac1p|x-x_i|^p-\psi_i,\ff i\ne j\right\},\ j=1,...,N,$$
 where $\psi_j\ge0,\ j=1,...,N$. (See \cite{11b}, p.221.)
 
 Taking into account that by \eqref{e2.9}, we have
 $$\Gamma(t,x^0_k+\sqrt{t}\,z;0,x^0_k)=0\(\frac1t\exp(-|z|^2)\),$$
 it follows that if $N>1$, then the function $t\to W_p(u(t),u_0)$ is not absolutely continuous in a neighbourhood of origin. (However, it is if $N=1$.)
  
Theorem \ref{t3.1}, as well as Theorem \ref{t3.2}, 
identify the vorticity flow \mbox{$u=u(t)$} to the Navier--Stokes equation \eqref{e1.1} as an absolutely continuous and H\"older--continuous curve in the Wasserstein space $\mathbb{W}_p(\rr^2)$, where $p\in(1,2)$. In~particular, it follows that the vorticity flow $t\to u(t)$ is smooth on $[0,\9)$  in the  $\mathbb{W}_p$--topology, which is stronger than the $w^*$--topology.

Apparently, on $(0,\9)$ this property is implied by the $C^\9$-regularity \eqref{e2.1} of the function $t\to u(t)$, but this is not true because the Wasserstein metric $\mathbb{W}_p$ is not dominated on large distances of $R^2$ by the $L^r$--metric, $r\ge1$.

As a matter of fact, as already emphasized, the Wasserstein distances are closely related to optimal transport, processes and it is also suitable to describing instantaneous interactions at large spatial distance in $\rrd$.  
In particular, it is suitable to represent the dynamics which have an infinite speed of propagation, as is the case with the vorticity equation. A nice illustration in this direction is the work \cite{5a} on the evolution and persistence of concentrated vortices in 2D Navier--Stokes flows. In this case, the Wasserstein distance is used to modelling the concentration of the vorticity field that evolves from an initial finite number of point mass vortices. From this perspective, the weak regularity given by Theorem \ref{t3.1} describes the smooth evolution of the vorticity flow $u(t)$ measured on large spatial distances. From the physical point of view, this behaviour suggests that the motion of fluid density does not exhibit infinite speed shocks. 

Inequality \eqref{e3.9} obtained in the proof of Theorem \ref{t3.1} as a byproduct of analysis on the continuity equation equivalent with \eqref{e1.7}, gives the time variation of Boltzmann--Gibbs entropy (entropy production) for the vorticity flow corresponding to the 2D Navier--Stokes equation.

\begin{remark}\label{r3.3} \rm In Theorem \ref{t3.1} and Theorem \ref{t3.2} the case $p=2$, which corresponds to the $2-$Wasserstein distance $W_2$ remains open. 	\end{remark}
	
\begin{remark}\label{r3.4}\rm Arguing as in \cite[Theorem 2.15]{1a},  it follows that the vorticity flow $u=u(t)$ given by Theorem \ref{t3.1} is alternatively generated by a continuity equation of the form \eqref{e4.21} with a Borel vector field $v^*=v^*(t)\in L^2(u(t)dx;\rr^2)$ such that
	$$\barr{c}
	\(\dd\int_{\rr^2}|v^*(t,x)|^pu(t,x)dx\)^{\frac1p}=|u'|(t), \mbox{ a.e. }t>0,\vsp 
	v^*(t)\in\ov{\{\nabla\vf;\ \vf\in C^\9_0(\rrd)\}}^{\,L^p(u(t)dx,\rr^2)}.\earr$$
	This vector field can be defined in variational terms as that which minimizes the instantaneous kinetic energy of the system and can be equivalently expressed in terms of  the tangent space to $\calp_p(\rr^2)$ at $u(t)$.	It should be emphasized, however, that $v^*$ is different from the vector field 
	$$v=-\nu\,\frac{\nabla u}u+K(u),$$
	arising in the representation of the vorticity equation \eqref{e1.7} as a continuity equation. (See \eqref{e4.21}.) From the physical point of view, this means that the motion of the curve $\{u(t);t\ge0\}$ avoids gradient-like movements, filtering so the irrotational vortices. 
\end{remark}

\section{Proof of Theorem \ref{t3.1}}\label{s4}
\setcounter{equation}{0}

We shall prove Theorem \ref{t3.1} following several steps The first one is

\begin{claim}\label{claim1} $u(t)\in\calp_p(\rr^2),\ \ff t\ge0.$
\end{claim}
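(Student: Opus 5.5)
The plan is to use the representation \eqref{e2.7} of the solution through the fundamental solution $\Gamma$ together with the Gaussian upper bound \eqref{e2.9}. Since $u_0\in\calp^a(\rr^2)$, \eqref{e2.6} already gives $u(t)\in\calp^a(\rr^2)$, so the claim reduces to showing $M_p(u(t))<\9$ for every $t\ge0$. The case $t=0$ is exactly hypothesis \eqref{e3.4}.

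For $t>0$ we use \eqref{e2.7}, Tonelli and \eqref{e2.9} to write
$$M_p(u(t))=\int_{\rr^2}u_0(y)\(\int_{\rr^2}|x|^p\Gamma(t,x;0,y)dx\)dy\le C_3\int_{\rr^2}u_0(y)\int_{\rr^2}|x|^pt\1e^{-C_4|x-y|^2/t}dx\,dy .$$
In the inner integral we substitute $x=y+\sqrt t\,z$ and apply the elementary inequality $|y+\sqrt t z|^p\le 2^{p-1}(|y|^p+t^{p/2}|z|^p)$. This bounds the inner integral by $C(|y|^p+t^{p/2})$, with $C$ depending only on $C_3,C_4,p$, because $\int_{\rr^2}(1+|z|^p)e^{-C_4|z|^2}dz<\9$. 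Integrating against $u_0$ then gives
$$M_p(u(t))\le C\(M_p(u_0)+t^{p/2}\),\quad t>0,$$
which is finite. This also yields a bound that is locally uniform in $t$, and it will be useful in later steps, for example in \eqref{e3.6}.

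The main point to check is the validity of \eqref{e2.9} for the present solution. The constants $C_j$ depend only on $\|u_0\|_1=1$, so they are uniform. The representation \eqref{e2.7} is stated for $u_0\in\calm(\rr^2)$, so it applies in particular to densities.

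If one prefers not to rely on the Gaussian bound for all $t$, there is an alternative route: multiply \eqref{e1.7} by a truncation $\vf_R(x)=(1+|x|^2)^{p/2}\chi(x/R)$ and integrate by parts. The term $\int u\,\Delta\vf_R$ is bounded since $p<2$ makes $\Delta\vf_R$ bounded. The term $\int u K(u)\cdot\nabla\vf_R$ is controlled using $|\nabla\vf_R|\le C(1+|x|)^{p-1}\le C(1+|x|)$ together with \eqref{e1.6} and \eqref{e2.5}. Gronwall's inequality and then $R\to\9$ would conclude. The delicate step in this route is integrability near $t=0$ of $|K(u)|_q|u|_{q'}$, which behaves like $t^{-1/2}$ and is therefore integrable. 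I would use the first, Gaussian-kernel route as the main argument.
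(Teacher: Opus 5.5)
Your main argument is correct and is essentially the paper's proof: you insert the Gaussian upper bound \eqref{e2.9} into the representation \eqref{e2.7}, split $|x|^p\le 2^{p-1}(|y|^p+|x-y|^p)$, and rescale to obtain $M_p(u(t))\le C(M_p(u_0)+t^{p/2})$, which is exactly \eqref{e4.1}. Your substitution $x=y+\sqrt t\,z$ is a cleaner version of the paper's computation, which writes $|z|^2$ in places where $|z|^p$ is meant, and the alternative truncation route is not needed.
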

\begin{proof} By \eqref{e2.7}--\eqref{e2.9} we have	 
	
\begin{equation}\label{e4.1a}
\barr{l}
\dd\int_{\rr^2}u(t,x)|x|^pdx
=\dd\int_{\rr^2\times\rr^2}
\Gamma(t,x,0,y)|x|^pu_0(y)dydx\\
 \qquad
\le Ct\1\dd\int_{\rr^2}|x|^p\exp(-C|x-y|^2t\1)u_0(y)dydx\vsp \qquad
\le Ct\1\dd\int_{\rr^2}u_0(y)dy\int_{\rr^2}(|y|^p+|x-y|^p)\exp(-C|x-y|^2t\1)dx\vsp\qquad
\le Ct\1\dd\int_{\rr^2}|y|^pu_0(y)dy\int_{\rr^2}\exp(-D|x-y|^2t\1)dx\vsp\qquad
+Ct\1\dd\int_{\rr^2}u_0(y)dy\int_{\rr^2}|x-y|^p\exp(-C|x-y|^2t\1)dx.\earr
\end{equation} 
(We have denoted by the same symbol $C$ several positive constants independent of $u_0$.) 
Taking into account that $u_0\in\calp_p(\rr^2)\cap\calp^a(\rr^2)$, we get
\begin{equation}\label{e4.1}
\barr{l}
\dd\int_{\rr^2}u(t,x)|x|^pdx
\le C\dd\int_{\rr^2}u_0(y)|y|^pdy
\int_{\rr^2}|z|^2\exp(-C|z|^2)dx\\\qquad 
+Ct^{\frac p2}\dd\int_{\rr^2}|z|^2\exp(-C|z|^2)dz
\le C(M_p(u_0)+t^{\frac p2}),\ \ff t\ge0,\earr\end{equation} 
as claimed.	
\end{proof}

\begin{claim}\label{claim2} We have
\begin{equation}\label{e4.3}
I(u(t))+\nu\int^t_0\int_{\rr^2}
\frac{|\nabla u(s,x)|^2}{u(s,x)}\,dxds\le I(u_0),\ \ff t\ge0.\end{equation} 	
\end{claim}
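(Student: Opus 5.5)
The plan is to derive \eqref{e4.3} as an entropy identity. We first work on $[\vp,t]$ with $\vp>0$, where $u$ is smooth and strictly positive, and then let $\vp\downarrow0$. By \eqref{e2.9}, $u(s,x)\ge C_1 s^{-1}\int_{\rr^2}\exp(-C_2|x-y|^2s^{-1})u_0(y)dy>0$ for $s>0$. By \eqref{e2.3}, $u$ and all its derivatives are bounded on $[\vp,t]\times\rr^2$. Hence $\log u$ is well defined, and the only issue for integrability is the behaviour at spatial infinity.

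\textbf{Step 1: the identity for $s>0$.} We multiply \eqref{e1.7} by $1+\log u$ and integrate over $\rr^2$. Formally this gives
$$\frac d{ds}\int_{\rr^2}u\log u\,dx+\nu\int_{\rr^2}\frac{|\nabla u|^2}u\,dx=\int_{\rr^2}uK(u)\cdot\frac{\nabla u}u\,dx=\int_{\rr^2}K(u)\cdot\nabla u\,dx.$$
The last integral equals $-\int_{\rr^2}u\,\divv K(u)\,dx=0$, because $K(u)=\nabla^\bot E*u$ is divergence free.

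To justify this rigorously, we multiply by $\chi_R(x)(1+\log(u+\eta))$, with $\chi_R$ a smooth cutoff and $\eta>0$, and then let $R\to\9$ and $\eta\to0$. The boundary terms involve $\nabla\chi_R$, and they contain the products $|\nabla u||\log u|$ and $u|K(u)||\log u|$. To control them we use two bounds.

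First, the Gaussian lower bound gives $|\log u(s,x)|\le C(1+|x|^2)$ for $s\in[\vp,t]$, with $C$ depending on $\vp,t$. This needs $u_0$ to have some mass in a bounded set, which is automatic.

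Second, we need decay of $u$ and $\nabla u$ against $|x|^2$. The upper bound in \eqref{e2.9}, combined with Claim~\ref{claim1}, gives $\int u|x|^p<\9$. For $\nabla u$ we use $\int|\nabla u||x|^2dx<\9$, which follows from the gradient bound on $\Gamma$, or from interpolation with \eqref{e2.3}. The velocity $K(u)$ is bounded by \eqref{e1.6} and \eqref{e2.3}.

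We then integrate in time, and the weak limit is taken using Fatou on the Fisher term. This yields
$$I(u(t))+\nu\int_\vp^t\int_{\rr^2}\frac{|\nabla u|^2}u\,dxds\le I(u(\vp)).$$

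\textbf{Step 2: passage $\vp\downarrow0$.} This is the main obstacle. We need $\limsup_{\vp\to0}I(u(\vp))\le I(u_0)$. We use \eqref{e2.7}, namely $u(\vp,x)=\int\Gamma(\vp,x;0,y)u_0(y)dy$, together with $\int\Gamma(\vp,x;0,y)\,dx=1$ from \eqref{e2.8}. Since $r\mapsto r\log r$ is convex, Jensen's inequality applied with the probability weights $y\mapsto\Gamma(\vp,x;0,y)$ (which integrate to $1$ in $y$ by \eqref{e2.8}) gives
$$u(\vp,x)\log u(\vp,x)\le\int\Gamma(\vp,x;0,y)\,u_0(y)\log u_0(y)\,dy.$$
Integrating in $x$ and using $\int\Gamma\,dx=1$ then gives $I(u(\vp))\le I(u_0)$ directly. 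Here $|I(u_0)|<\9$, and the negative part of $u_0\log u_0$ is integrable by \eqref{e3.2} and the moment bound.

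Having this, we let $\vp\to0$ in the Fisher integral by monotone convergence, and \eqref{e4.3} follows. Note that $I(u(t))>-\9$ by \eqref{e3.2} and Claim~\ref{claim1}.

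If the double-stochasticity of $\Gamma$ from \eqref{e2.8} turns out not to be usable in this form, the fallback is the following. Use lower semicontinuity of $I$ in the opposite direction. Since $I$ is convex and weakly lower semicontinuous on bounded moment sets, $u(\vp)\to u_0$ in $L^1$ by \eqref{e2.4}. Combine this with an a priori bound $I(u(\vp))\le I(u_0)+o(1)$, obtained by comparing with the heat semigroup via the Duhamel formula \eqref{e2.1}.
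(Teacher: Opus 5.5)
Your argument is correct, but it handles the initial time differently from the paper.

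\textbf{The paper's route.} It multiplies \eqref{e1.7} by $\vf_n\log(u+\vp)$ and integrates the truncated entropy identity directly from $s=0$. It removes $\vp$ and the cutoff using only the bound $|r\log r|\le C(r^\alpha+r^{1+\alpha})$ together with the $L^r$ rates \eqref{e2.2} and \eqref{e1.6}, which must be integrable in time near $s=0$. It never needs positivity of $u$ or the kernel $\Gamma$. However, it takes the limit at the initial time without comment: continuity of $s\mapsto\int h(u(s)+\vp)\vf_n\,dx$ at $s=0$ needs more than $L^1$-continuity of $u$.

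\textbf{Your route.} You stay on $[\vp,t]$, where $u$ is smooth and bounded and, by the lower bound in \eqref{e2.9}, satisfies $|\log u|\le C(1+|x|^2)$. The cutoff errors are then controlled by moments. You treat the initial layer separately by Jensen's inequality with the doubly stochastic kernel from \eqref{e2.7}--\eqref{e2.8}, which gives $I(u(\vp))\le I(u_0)$ exactly. Tonelli applies to the positive and negative parts separately, and the latter is finite by Lemma A.1.

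\textbf{Comparison.} Your route makes the passage $t\downarrow0$ genuinely rigorous and gives monotonicity of $t\mapsto I(u(t))$ for free. The price is reliance on the two-sided Gaussian bounds and on $\int\Gamma\,dx=\int\Gamma\,d\xi=1$, that is, on $\divv K(u)=0$. Your fallback paragraph is unnecessary. Lower semicontinuity alone also points the wrong way: it gives only $I(u_0)\le\liminf_{\vp\to0}I(u(\vp))$.

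\textbf{One slip in Step 1.} The weighted bound $\int|\nabla u|\,|x|^2dx<\infty$ need not hold for $p<2$, and you do not need it. Near well-separated pieces of mass, $|\nabla u|$ is comparable to $u$, so the bound typically fails when $M_2(u_0)=\infty$.
\begin{itemize}
\item Since $|\nabla\chi_R|\le C/R$ on $\{R\le|x|\le2R\}$, the weight $1+|x|$ already suffices.
\item Better, write $(1+\log u)\nabla u\cdot\nabla\chi_R=\nabla(u\log u)\cdot\nabla\chi_R$ and integrate by parts to get $-\int u\log u\,\Delta\chi_R\,dx$. This term is $O\big(\int_{|x|\ge R}u\,dx\big)$, so no gradient estimate is needed at all.
\end{itemize}
Also, the $L^\infty$ bound on $K(u)$ comes from $u\in L^1\cap L^\infty$ by splitting the Biot--Savart kernel, not from \eqref{e1.6}.
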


\begin{proof} Let $\vf_n(x)=\eta\(\frac{|x|^2}n\),\ x\in\rr^2,$ where $\eta$ is the cut-off function
\begin{equation}\label{e4.4}
\eta\in C^2([0,\9)),\ \eta(r)=1\ \ff r\in[0,1],\ \eta(r)=0\ \ff r\ge2,\ 0\le\eta(r)\le1. \end{equation}
Taking into account that by \eqref{e2.3} $u$ is smooth on $(0,\9)\times\rr^2$, by \eqref{e1.7} we have, for all $\vp>0$,

$$\barr{r}
\dd\int_{\rr^2}u_t(t,x)\log(u(t,x)+\vp)\vf_n(x)dx
+\dd\int_{\rr^2}\nabla u(t,x){\cdot}\nabla(\vf_n(x)\log(u(t,x)+\vp))dx\\
=\dd\int_{\rr^2}(K(u(t,x)){\cdot}\nabla u(t,x))\vf_n(x)\log (u(t,x)+\vp)dx,\ \ff t>0,
\earr$$ 
because $\divv\,K(u(t,x))=0\mbox{ on }(0,\9)\times\rr^2$.
This yields

\begin{equation}\label{e4.5}
	\barr{l}
\dd\frac d{dt}\int_{\rr^2}h(u(t,x)+\vp)\vf_n(x)dx
	+\dd\int_{\rr^2}\frac{|\nabla u(t,x)|^2}{u(t,x)+\vp}\,\vf_n(x)dx\\
	\qquad\qquad+\dd\int_{\rr^2}\nabla h(u(t,x)+\vp){\cdot}\nabla\vf_n(x)dx\vsp
	\qquad\qquad=\dd\int_{\rr^2}(K(u(t,x)){\cdot}\nabla h(u(t,x)+\vp))\vf_n(x)dx,\\
\qquad\qquad=-\dd\int_{\rr^2}h(u(t,x)+\vp))
K(u(t,x)){\cdot}\nabla\vf_n(x)dx,\mbox{ a.e. } \ff t>0,
\earr	
\end{equation}
where $h$ is the function
$$h(r)=r(\log r-1),\ \ff r>0.$$
We have
\begin{equation}\label{e4.6}
\barr{rcll}
|\nabla\vf_n(x)|&\le&\dd\frac{2|\eta'|_\9}{\sqrt{n}},\ 
\nabla\vf_n(x)=0,&\ff x\in\Sigma_n,\vsp 
|\Delta\vf_n(x)|&\le&\dd\frac4n(|\eta"|_\9+|\eta'|_\9),
&\ff x\in\Sigma_n,\earr
\end{equation} 
where 
$\Sigma_n=\{x;\ 0\le |x|\le\sqrt{n}\}\cup\{x;\ |x|>2\sqrt{n}\}.$ 

We set
\begin{eqnarray}
H^\vp_1(t)
&=&-\dd\int_{\rr^2}\nabla h(u(t,x)+\vp){\cdot}\nabla\vf_n(x)dx\label{e4.7}\vsp 
&=&\dd\int_{\rr^2}h(u(t,x)+\vp)\Delta\vf_n(x)dx,\ t>0,\nonumber\\[1mm]
H^\vp_2(t)&=&-\dd\int_{\rr^2}h(u(t,x)+\vp)K(u(t,x)){\cdot}\nabla\vf_n(x)dx,\ t>0.\label{4.7a}
\end{eqnarray} 
By \eqref{e4.5} we have
\begin{equation}\label{e4.8}
\hspace*{-5mm}\barr{r}
\dd\frac d{dt}\!\int_{\rr^2} \!\!h(u(t,x)+\vp)\vf_n(x)dx
+\!\!\int_{\rr^2}\frac{|\nabla u(t,x)|^2}{u(t,x)+\vp}\vf_n(x)dx
=H^\vp_1(t)+H^\vp_2(t),\\ \ff t>0.\earr\end{equation} 
Taking into account that, for some $0<\alpha<1$ 
and $C>0$,
$$|h(r)|\le C\(r^\alpha I_{[0<r\le1]}(r)+r^{\alpha+1}I_{[r\ge1]}(r)\), \ \ff r\ge0,$$
and that by \eqref{e2.2}
$$|u(t)|_{1+\alpha}\le Ct^{-\frac\alpha{\alpha+1}}|u_0|_1,\ \ff t>0,$$
it follows by the Lebesgue dominated  convergence theorem that
\begin{equation}\label{e4.9}
\lim_{\vp\to0}H^\vp_1(t)=-\int_{\rr^2}u(t,x)\log u(t,x)\Delta\vf_n(x)dx,\ \ff t>0.\end{equation} 
Similarly, taking into account that by \eqref{e1.6}
$$|K(u(t))|_q\le C|u(t)|_r\le Ct^{\frac 1r-1}|u_0|_1,$$
for all $r\in(1,2)$ and $\frac1q=\frac1r-\frac12$, it follows that
\begin{equation}\label{e4.10}
\lim_{\vp\to0}H^\vp_2(t)=-\int_{\rr^2}u(t,x)
\log(u(t,x))(K(u(t,x){\cdot}\nabla \vf_n(x)))dx,\ \ff t>0.
\end{equation} 
By \eqref{e4.8}, we have
\begin{equation}\label{e4.10a}
\barr{l}
\dd\int_{\rr^2}h(u(t,x)+\vp)\vf_n(x)dx+\nu
\dd\int^t_0\dd\int_{\rr^2}
\frac{|\nabla u(s,x)|^2}{u(s,x)+\vp}\vf_n(x)dxds\vsp 
\qquad+\dd\int_{\rr^2}h(u_0(x)+\vp)\vf_n(x)dx
+\int^t_0(H^\vp_1(s)+H^\vp_2(s))ds,\ \ff t\ge0.
\earr\end{equation}
We note that, for $0<\alpha<1$, we have
$$z\log z\ge-C_\alpha z^\alpha,\ \ff z\ge0,$$
and this yields
$$h(u(t,x)+\vp)+C_\alpha(u(t,x)+\vp)^\alpha\ge0,\ \mbox{a.e. }(t,x)\in(0,\9)\times\rr^2.$$
Then, by Fatou's lemma we have
\begin{equation}\label{e4.10aa}
\hspace*{-4mm}\barr{l}
\dd\liminf_{\vp\to0}
\(\dd\int_{\rr^2}h(u(t,x)+\vp)\vf_n(x)dx
+C_\alpha\dd\int_{\rr^2}(u(t,x)+\vp)^\alpha\vf_n(x)dx\)
\vsp 
\ge\dd\int_{\rr^2}h(u(t,x))\vf_n(x)dx
+C_\alpha\int_{\rr^2}(u(t,x))^\alpha\vf_n(x)dx,\ \ff t>0.\earr\end{equation}
On the other hand, since
$$0\le(u(t,x)+\vp)^\alpha\vf_n(x)\le((u(t,x))^\alpha+1)\vf_n(x),\ \ff(t,x)\in(0,\9)\times\rr^2,$$
and $$(u(t))^\alpha\vf_n\in L^1,\ \ff t>0,$$
we have
$$\lim_{\vp\to0}\int_{\rr^2}(u(t,x)+\vp)^\alpha\vf_n(x)dx
=\int_{\rr^2}(u(t,x))^\alpha\vf_n(s)dx,\ \ff t>0,$$
and so, \eqref{e4.10aa} yields
$$\liminf_{\vp\to0}\int_{\rr^2}h(u(t,x)+\vp)\vf_n(x)dx\ge\int_{\rr^2}h(t,x))\vf_n(x)dx.$$
Then, letting $\vp\to0$ in \eqref{e4.10a}, by \eqref{e4.9}--\eqref{e4.10} we get that
\begin{equation}\label{e4.11}
\hspace*{-4mm}\barr{l}
\dd\int_{\rr^2}u(t,x)\log(u(t,x))\vf_n(x)dx
+\nu\dd\int^t_0\int_{\rr^2}
\frac{|\nabla u(s,x)|^2}{u(s,x)}\vf_n(x)dxds\vsp
\qquad\le\dd\int_{\rr^2}h(u_0(x))\vf_n(x)dx
-\int^t_0\dd\int_{\rr^2}u(s,x)\log(u(s,x))(\Delta\vf_n(x)\mk\vsp 
\qquad+K(u(s,x)){\cdot}\nabla\vf_n(x))dxds,\,\ff t\ge0.
\earr
\end{equation}

Now, if we let $n\to\9$ in \eqref{e4.11}, we get by \eqref{e4.6}

\begin{equation}\label{e4.12}
	\barr{r}
\dd\int_{\rr^2}u(t,x)\log(u(t,x))dx+\nu\int^t_0
\int_{\rr^2}\frac{|\nabla u(s,x)|^2}{u(s,x)}dxds\vsp
 \le\dd\int_{\rr^2}u_0\log(u_0)dx=I(u_0)<\9,\ \ff t\ge0,\earr\end{equation} 
because, by \eqref{e3.2}, we have

\begin{equation}\label{e4.13}
\hspace*{-5mm}\barr{l}
\dd\int^t_0\dd\int_{\rr^2}
|u(s,x)\log(u(s,x))\Delta\vf_n(x)|dx\vsp 
\quad\le\dd\int^t_0\dd\int_{\rr^2}((u(s,x)\log u(t,x))^+
+(u(t,x)\log(u(t,x)))^-)|\Delta\vf_n(x)|dx\vsp 
\quad\le C\dd\int^t_0\int_{\rr^2}
((u(s,x))^\alpha+(u(s,x))^{\alpha+1})|\Delta\vf_n(x)|dx,
\earr\end{equation}
for $\alpha\in(0,1)$. By \eqref{e4.6}, we have
$$\barr{ll}
\dd\int_{\rr^2}(u(s,x))^\alpha|\Delta\vf_n(x)|dx\!\!\!
&\le\dd\(\int_{\Sigma_n}u(s,x)dx\)^\alpha
\(\int_{\Sigma_n}|\Delta\vf_n(x)|^{1-\alpha}dx\)^{\frac1{1-\alpha}}
\vsp
&\le C\(\dd\int_{\Sigma^c_n}u(s,x)dx\)^1,\earr$$
where $\Sigma^c_n=\{x;\sqrt{n}\le|x|\le2\sqrt{n}\}$. Since $u\in L^1((0,T)\times\rr^2)$, $\ff T>0$, we infer that
\begin{equation}\label{e4.14a}
\lim_{n\to\9}\int^t_0\int_{\rr^2}(u(s,x))^\alpha|\Delta\vf_n(x)|dxds=0.\end{equation}
We also have by \eqref{e4.7} and \eqref{e2.2},
\begin{equation}\label{e4.14aa}
 \int^t_0\!\!\!\int_{\rr^2}(u(s,x))^{\alpha+1}
|\Delta\vf_n(x)|dxds
\le\dd\frac Cn\int^t_0\!\!\!|u(s)|^{\alpha+1}_{\alpha+1}ds
\le\dd\frac Cn\,t^{1-\alpha}|u_0|^{\alpha+1}_1.
\end{equation}
Then, by \eqref{e4.13}--\eqref{e4.14aa} we see that
\begin{equation}\label{e4.14aaa}
\dd\lim_{n\to\9}	\int^t_0\int_{\rr^2}|u(s,x)\log u(s,x)|\,|\Delta\vf_n(x)|dx=0. 
\end{equation}
Similarly, by \eqref{e1.6} and \eqref{e2.2} we have 

\begin{equation}\label{e4.14}
\hspace*{-4mm}\barr{l}
\dd\int^t_0\int_{\rr^2}|u(s,x)\log(u(s,x))|\,
|K(u(s,x)){\cdot}\nabla\vf_n(x)|dxds\vsp
\qquad\le\dd\frac Cn\int^t_0\dd\int_{\rr^2}
(|u(s,x)|^\alpha+|u(s,x)|^{\alpha+1})
|K(u(s,x))|dx\vsp
\qquad\le\dd\frac Cn\int^t_0(|K(u(s))|_{q_1}
|u(s)|^\alpha_{\alpha q'_1}
+|K(u(s,x))|_{q_2}|u(s)|^{\alpha+1}_{(\alpha+1)q'_2})ds
\vsp
\qquad\le\dd\frac Cn\int^t_0
\(s^{\frac1{m_1}-\frac1{q_1}-\alpha}
+s^{\frac1{m_2}-\frac1{q_2}-\alpha-1}\)ds
\le \dd\frac Cn\,t^a,\earr
\end{equation} 
where $\frac1{q_i}=\frac1{m_i}-\frac12$, $m_i\in(1,2),\ \frac1{q'_i}=1-\frac1{q_i}$, $i=1,2$, and $\alpha\in(0,1)$. Then, for $\alpha$ and $m_i$ suitably chosen, $a>0$, and so for $n\to\9$ the left side parts of \eqref{e4.13} and \eqref{e4.14} go to zero and, therefore,  \eqref{e4.12} (equivalently \eqref{e4.3}) follows as claimed.

Taking into account that by \eqref{e3.2} and \eqref{e4.1}
\begin{equation}\label{e4.15}
I(u(t))\ge-C_\alpha(M_p(u(t)+1))^\alpha\ge-C^1_\alpha(M_p(u_0)+1+t^{\frac p2})^\alpha,\ \ff t\ge0,
\end{equation}
 for $\alpha\in\(\frac{p+2}2,1\)$, it follows by \eqref{e4.12}
\begin{equation}\label{e4.16}
\nu\int^t_0\int_{\rr^2}
\frac{|\nabla u(s,x)|^2}{u(s,x)}dx
\le t(|I(u_0)|+C^1_\alpha(M_p(u_0)+1+t^{\frac p2})^\alpha),\ \ff t\ge0,\end{equation} 
for $\alpha\in\(\frac{p+2}2,1\)$.

Now, coming back to \eqref{e4.8} we get by \eqref{e4.9}--\eqref{e4.10}  that
$$\barr{l}
\dd\lim_{\vp\to0}
\(\frac d{dt}\int_{\rr^2}h(u(t,x)+\vp)\vf_n(x)dx
+\nu\dd\int_{\rr^2}
\frac{|\nabla u(t,x)|^2}{u(t,x)+\vp}\vf_n(x)dx\)\vsp 
\qquad=-\dd\int_{\rr^2}u(t,x)\log (u(t,x))(\Delta\vf_n(x)+K(u(t,x)){\cdot}\nabla\vf_n(x))dx,\ \ff t>0.
\earr$$
Since, by \eqref{e4.16}, by virtue of the Lebesgue dominated convergence theorem,
$$\dd\lim_{\vp\to0}
\dd\int_{\rr^2}\frac{|\nabla u(t,x)|^2}{u(t,x)+\vp}\vf_n(x)dx
=\dd\int_{\rr^2}
\frac{|\nabla u(t,x)|^2}{u(t,x)}\vf_n(x)dx,\mbox{ a.e. }t>0,$$
we get
$$\barr{ll}
\dd\lim_{\vp\to0}
\(\frac d{dt}\int_{\rr^2}h(u(t,x)+\vp)\vf_n(x)dx\)
+\nu\dd\int_{\rr^2}
\frac{|\nabla u(t,x)|^2}{u(t,x)}\vf_n(x)dx\vsp
\quad=-\dd\int_{\rr^2}(u(t,x)\log(u(t,x))
(\Delta\vf_n(x)+K(u(t,x)){\cdot}\nabla\vf_n(x)))dx,\mbox{ a.e. }t>0,
\earr$$
which yields
$$\barr{l}
\dd\frac d{dt}
\int_{\rr^2}u(t,x)\log(u(t,x))\vf_n(x)dx
+\nu\dd\int_{\rr^2}
\frac{|\nabla u(t,x)|^2}{u(t,x)}\vf_n(x)dx\vsp
\qquad=-\dd\int_{\rr^2}u(t,x)\log(u(t,x))(\Delta\vf_n(x)+K(u(t,x)){\cdot}\nabla\vf_n(x))dx,\mbox{ a.e.}t>0.
\earr$$
Hence, by \eqref{e4.12} it follows that
$$\lim_{u\to\9}\frac d{dt}\int_{\rr^2}u(t,x)\log u(t,x)
\vf_n(x)dx=\frac  d{dt}\,I(u(t))\mbox{ in }\cald'(0,\9).$$
Then, for $n\to\9$ we infer that $\frac{dI}{dt}\in L^1(0,\9)$ and, therefore, the function $t\to I(u(t))$ is absolutely continuous on $[0,\9)$ and \eqref{e3.8} holds, that is,
$$\frac d{dt}\,I(u(t))+\nu\int_{\rr^2}\frac{|\nabla u(t,x)|^2}{u(t,x)}\,dx=0,\ \mbox{ a.e. }t>0.\vspace*{-10mm}$$
\end{proof}	\sk

\begin{claim}\label{claim3} We have, for all $t\ge0$,
\begin{equation}\label{e4.17}
\nu\!\!\int^t_0\!\!\int_{\rr^2}
\frac{|\nabla u(s,x)|^p}{(u(s,x))^{p-1}}dxds
\le t^{\frac p2+\frac2{2-p}}\,\nu^{\frac{2-p}2}
\(|I(u_0)|{+}C^1_\alpha(M_p(u_0)
{+}^{\frac p2})^\alpha\)^{\frac p2}\!\!.	
	\end{equation} 	
\end{claim}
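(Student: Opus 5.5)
The plan is to reduce \eqref{e4.17} to the Fisher-information bound \eqref{e4.16} of Claim~\ref{claim2} by H\"older's inequality. We never estimate $\nabla u$ directly. Instead we write the integrand as
$$\frac{|\nabla u|^p}{u^{p-1}}=\(\frac{|\nabla u|^2}{u}\)^{\frac p2}u^{1-\frac p2},$$
which is legitimate pointwise on $(0,\9)\times\rr^2$. There $u$ is smooth by \eqref{e2.3} and strictly positive by the lower Gaussian bound in \eqref{e2.9}, so no division by zero occurs.

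For fixed $s>0$ we apply H\"older in $x$ with exponents $\frac2p$ and $\frac2{2-p}$, both admissible since $1<p<2$. Using $\int_{\rr^2}u(s,x)dx=1$ from \eqref{e2.6}, this gives
$$\int_{\rr^2}\frac{|\nabla u(s,x)|^p}{(u(s,x))^{p-1}}\,dx\le\(\int_{\rr^2}\frac{|\nabla u(s,x)|^2}{u(s,x)}\,dx\)^{\frac p2}\(\int_{\rr^2}u(s,x)dx\)^{\frac{2-p}2}=\(\int_{\rr^2}\frac{|\nabla u(s,x)|^2}{u(s,x)}\,dx\)^{\frac p2}.$$
Next we integrate over $(0,t)$ and apply H\"older in time with the same pair of exponents. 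This yields
$$\int_0^t\!\!\int_{\rr^2}\frac{|\nabla u|^p}{u^{p-1}}\,dxds\le t^{\frac{2-p}2}\(\int_0^t\!\!\int_{\rr^2}\frac{|\nabla u|^2}{u}\,dxds\)^{\frac p2}.$$

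We then insert \eqref{e4.16}, or more directly \eqref{e4.12} combined with the lower bound \eqref{e4.15} on $I(u(t))$, which gives
$$\nu\int_0^t\!\!\int\frac{|\nabla u|^2}{u}\le |I(u_0)|+C^1_\alpha(M_p(u_0)+1+t^{\frac p2})^\alpha.$$
If one uses \eqref{e4.16} exactly as written, with its extra factor $t$, one gets an extra $t^{p/2}$. Multiplying by $\nu$ and collecting powers of $\nu$ and $t$ then produces the stated form. The factor $\nu\cdot\nu^{-p/2}=\nu^{\frac{2-p}2}$ gives the power of $\nu$. The powers of $t$ combine into a factor at most of the size $t^{\frac p2+\frac2{2-p}}$ claimed, at least for $t\ge1$. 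For $t\le1$ one can absorb the difference into the constant, or simply keep the sharper exponent $\frac{2-p}2+\frac p2$ that the argument produces. I would not try to reproduce the exact exponent of $t$ printed in \eqref{e4.17}, since only the local-in-time finiteness is used later.

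The argument is essentially routine once Claim~\ref{claim2} is available, so the main obstacle is not analytic. It is rather the bookkeeping of the constants, namely the exact powers of $t$ and $\nu$ and the $\alpha$-dependent term from \eqref{e4.15}, so that the result matches \eqref{e4.17}. The only genuine point to check is the justification of the pointwise identity and of Fubini/H\"older near $s=0$. This is handled by noting that \eqref{e4.12} already guarantees $\int_0^t\int|\nabla u|^2/u<\9$, so all integrals are finite and nonnegative and Tonelli applies.
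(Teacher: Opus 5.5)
Your argument is the paper's own: H\"older in $x$ with exponents $\frac2p,\frac2{2-p}$ against the unit mass $\int_{\rr^2}u(s,x)\,dx=1$, then H\"older in $s$, then the Fisher-information bound from Claim~\ref{claim2}. Your time factor $t^{\frac{2-p}2}$ is the correct one; the paper's $t^{\frac2{2-p}}$ is a slip. You are also right to use \eqref{e4.12} with \eqref{e4.15} directly, since for $t<1$ the extra factor $t$ in \eqref{e4.16} does not follow from them.

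What needs correcting is your handling of $t\le1$. There is no free constant in \eqref{e4.17}. For $t<1$ the printed factor $t^{\frac p2+\frac2{2-p}}$ is smaller than your $t^{\frac{2-p}2}$, because the exponents differ by $p-1+\frac2{2-p}>0$. So nothing can be absorbed. The exponent $1$ you get via \eqref{e4.16} is not available either.

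In fact \eqref{e4.17} as printed is false as $t\to0$. Take a radial Gaussian $u_0$ of variance $\sigma^2$ per coordinate. Then $K(u)\cdot\nabla u=0$, so $u(s)$ is the Gaussian of variance $\sigma^2+2\nu s$. The left-hand side equals $\nu c_p\int_0^t(\sigma^2+2\nu s)^{-p/2}ds\sim\nu c_p\sigma^{-p}t$, with $c_p$ the $p$-th absolute moment of the standard Gaussian on $\rr^2$. The printed right-hand side is $o(t)$.

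What your argument proves, and what should be recorded, is
\[
\nu\int_0^t\!\!\int_{\rr^2}\frac{|\nabla u|^p}{u^{p-1}}\,dx\,ds\le(\nu t)^{\frac{2-p}2}\Big(|I(u_0)|+C^1_\alpha\big(M_p(u_0)+1+t^{\frac p2}\big)^\alpha\Big)^{\frac p2}.
\]
This implies the printed bound for $t\ge1$, and it is all that \eqref{e4.22} and \eqref{e3.6} require.
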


\begin{proof} 
By H\"older's inequality, we have
$$\barr{rcl}
\nu\dd\int^t_0\(\int_{\rr^2}
\frac{|\nabla u(s,x)|^p}{(u(s,x))^{p-1}}dx\)ds
&\le&
\nu\dd\int^t_0\(\int_{\rr^2}
\frac{|\nabla u(s,x)|^2}{u(s,x)}dx\)^{\frac p2}\vsp 
\(\dd\int_{\rr^2}u(s,x)dx\)^{\frac{2-p}2}ds&=&
\nu\,t^{\frac2{2-p}}
\(\dd\int^t_0\int_{\rr^2}
\frac{|\nabla u(s,x)|^2}{u(s,x)}dxds\)^{\frac p2},
\earr$$	
and so, by \eqref{e4.16} we get \eqref{e4.17}. 
\end{proof} 

\begin{claim}\label{claim4} We have, for all $p\in[1,2),$
	\begin{equation}\label{e4.18}
		\int^t_0\int_{\rr^2}|K(u(s,x))|^pu(s,x)dxds
		\le C_p\,t^{1-\frac p2}|u_0|^{p+1}_1,\ \ff t\ge0.
	\end{equation} 	
\end{claim}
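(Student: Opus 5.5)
Fix $s>0$ and estimate $\int_{\rr^2}|K(u(s,x))|^p u(s,x)\,dx$ by H\"older's inequality combined with the Biot--Savart bound \eqref{e1.6} and the decay estimate \eqref{e2.2}; the time integral should then converge at $s=0$.

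Choose exponents $q>2$ and $m\in(1,2)$ with $\frac1q=\frac1m-\frac12$, and let $r$ satisfy $\frac pq+\frac1r=1$. Then
$$\int_{\rr^2}|K(u(s))|^pu(s)\,dx\le |K(u(s))|_q^p\,|u(s)|_r\le C\,|u(s)|_m^p\,|u(s)|_r .$$
By \eqref{e2.2} the right-hand side is bounded by $C\,s^{p(\frac1m-1)}s^{\frac1r-1}|u_0|_1^{p+1}$. Since $\frac1m=\frac1q+\frac12$ and $\frac1r=1-\frac pq$, the exponent of $s$ is
$$p\Big(\frac1q-\frac12\Big)-\frac pq=-\frac p2,$$
independently of $q$. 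So we may simply take $q=2p$, for instance. This gives $r=2$ and $\frac1m=\frac1{2p}+\frac12$, so $m\in(1,2)$ for $p>1$ and \eqref{e1.6} applies. The case $p=1$ is handled by any admissible $q>2$, and the exponent is again $-\frac12$.

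Integrating over $(0,t)$ gives $\int_0^t s^{-p/2}\,ds=\frac{2}{2-p}\,t^{1-\frac p2}$, which is finite precisely because $p<2$. This yields \eqref{e4.18} with $C_p$ depending on $p$, $2-p$ and the constants in \eqref{e1.6} and \eqref{e2.2}.

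The only delicate point is choosing the exponents so that \eqref{e1.6} is applicable ($m\in(1,2)$) while $r\ge1$. One must also check that \eqref{e2.2} is used with constants $C_r$ that are uniform in $s$; this holds because \eqref{e2.2} is valid for all $t>0$. The integrability at $s=0$ is exactly where $p<2$ enters.
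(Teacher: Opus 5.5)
Your proposal is correct and is essentially the paper's argument: H\"older with $|K(u(s))|^p$ in $L^{q/p}$ and $u(s)$ in $L^r$ (the paper writes $q=pr'$), then \eqref{e1.6} and \eqref{e2.2} give the bound $C\,s^{-p/2}|u_0|_1^{p+1}$, which is integrable on $(0,t)$ because $p<2$. Your explicit choice $q=2p$ for $p>1$, with a separate $q>2$ when $p=1$, also makes precise the constraint $m\in(1,2)$, i.e.\ $1<r<\frac{2}{2-p}$, which the paper's phrase ``for all $r>1$'' glosses over.
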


\begin{proof} We have, for all $r>1$,
\begin{equation}\label{e4.19}
	\int_{\rr^2}|K(u(s,x))|^pu(s,x)dx
	\le|K(u(s,x))|^p_{pr'}|u(s)|_r,\ \ff s>0,
\end{equation} 
where $\frac1{r'}=1-\frac1r$. By \eqref{e1.6} we have, for all $p\in[1,2),$
\begin{equation}\label{e4.20}
|K(u(s))|^p_{pr'}\le C|u(s)|^p_{m},\ \ff s\ge0,
\end{equation}  
where $\frac1m=\frac1{pr'}+\frac12=\frac1p\(1-\frac1r\)+\frac12,\ m\in(1,2).$\mk

Then, by \eqref{e2.2} and \eqref{e4.19}--\eqref{e4.20} we get
$$\int_{\rr^2}|K(u(s,x))|^pu(s,x)dx\le C\,s^{-\frac p2}|u_0|^{p+1}_1,\ \ff s>0,$$
and so \eqref{e4.18} follows.
\end{proof}

\begin{proof}[Proof of Theorem {\rm\ref{t3.1}}] (continued) We shall write the vorticity equation \eqref{e1.7} as the continuity equation  
\begin{equation}\label{e4.21}
	\barr{ll}
	u_t+\divv(vu)=0&\mbox{in }(0,\9)\times\rr^2,\vsp
	u(0,x)=u_0(x),&x\in\rr^2,\earr
\end{equation} 
where $v=v(t,x)$ is the velocity vector field
\begin{equation}\label{e4.27a}
	v(t,x)=-\nu\,\frac{\nabla u(t,x)}{u(t,x)}+K(u(t,x)),\ (t,x)\in(0,\9)\times\rr^2.\end{equation}  
By \eqref{e4.18}--\eqref{e4.19} we know that
\begin{equation}\label{e4.22}
\barr{r}
\dd	\int^T_0\!\!\!\int_{\rr^2}\!|v(t,x)|^pu(t,x)dx\le C_T(|I(u_0)|^{\frac p2}{+}(M_p(u_0){+}1)^{\frac{\alpha p}2}{+}|u_0|^{p+1}_1),\\ \ff T>0.\earr
\end{equation} 
By Theorem 10.2.2 in \cite{1} (see, also, Theorem 2.15 in \cite{1a} and Theorem 4.6 in \cite{9}) it follows by \eqref{e4.22} that, for all $T>0$, the function $u:[0,T]\to\mathbb{W}_p(\rr^2)$ is absolutely continuous and 
$$|u'|(t)\le\(\int_{\rrd}|v^*(t,x)|^pu(t,x)dx\)^{\frac1p},\mbox{ a.e. }t\in(0,T).$$
Then, by \eqref{e4.22},  it follows \eqref{e3.6}, as claimed. As regards \eqref{e3.7}, which, in particular,  implies the H\"older continuity in $\mathbb{W}_p(\rr^2)$ of $u=u(t)$, it follows by \eqref{e3.7} and the inequality 
$$W_p(u(t),u(s))\le\int^t_s|u'|(\tau)d\tau,\ \ff0<s\le t<T.$$ 
 Next, \eqref{e3.9} follows by \eqref{e3.8} and the classic logarithmic Sobolev inequality  (see,~e.g., \cite{4})
$$\exp\(\int_{\rr^2}z(x)\log z(x)dx\)\le(4\pi e)\1\int_{\rr^2}\frac{|\nabla z(x)|^2}{z(x)}\,dx,$$
for all $z\in H^1(\rr^2)\cap\calp^a(\rr^2)$.\sk 
\end{proof}

\section{Proof of Theorem \ref{t3.2}} 
	\setcounter{equation}{0}

 As mentioned earlier, Theorem \ref{t3.2} is a direct consequence of smoothing effect of the vorticity flow $t\to u(t)$ on the initial data $u_0\in\calm(\rr^2)$. Namely, let $u_0\in\calp_p(\rr^2)$ and let $u$ be the corresponding  solution to \eqref{e1.7}. Let $\delta>0$ be ar\-bi\-tra\-rily small, but fixed. As seen in Proposition~\ref{p2.1}, $u=u(t,x)$ is smooth on $(0,\9)\times\rr^2$ and, in particular, it follows that $u(\delta)\in\calp^a(\rr^2)$. Moreover, by the integral representation formula \eqref{e2.7} it follows as in the proof of Claim \ref{claim1} (see \eqref{e4.1a}--\eqref{e4.1}) that $u(t)\in\calp_p(\rr^2)$ for all $t\ge0$ and, therefore, $u(\delta)\in\calp_p(\rr^2)$. Furthermore, by \eqref{e2.2} it follows that $u(\delta)\in L^{\alpha+1}$ for all $\alpha\in[0,\9)$ and this implies (see Remark \ref{r5.1} in Appendix) that $|I(u(\delta))|<\9$. Then, applying Theorem \ref{t3.1} with the initial data $u(\delta)$ on the interval $(\delta,\9)$, we get the desired result.\hfill$\Box$

\section{Appendix}\label{s5}
\setcounter{equation}{0}

\n{\bf Lemma A.1.} {\it Let $I:L^1\to\,]-\9,+\9]$ be the function \eqref{e3.1}. Then, for each $\alpha\in\(\frac{p+2}2,1\)$ there is 
	$C_\alpha>0$ such that
	\begin{equation}\label{e5.1}
		I(u_0)\ge- C_\alpha(M_p(u_0)+1)^\alpha,\ \ff u_0\in D(I)=\calp^a(\rr^2)\cap \calp_p(\rr^2).\vspace*{-6mm}
\end{equation}}
\begin{proof}
	The argument is similar to that used in \cite{7} for $p=2$, and so it will be sketched only. Namely,  we note first that, for $\alpha\in(0,1)$,
	$$(z\log z)^-\le C^1_\alpha\,z^\alpha,\ \ff z\ge0,$$	where $C^1_\alpha$ is independent of $z$. This yields, for $u_0\in D(I),$
\begin{equation}\label{e5.2}
\hspace*{-4mm}	\barr{l}
	\dd\int_{\rr^2}(u_0(x)\log u_0(x))^-dx
	\le C^1_\alpha\dd\int_{\rr^2}u^\alpha_0(x)dx\vsp 
	\qquad\le C^1_\alpha\(\dd\int_{\rr^2}u_0(x)(1+|x|^p)dx\)^{\alpha}
	\(\dd\int_{\rr^2}(1+|x|^p)^{-\frac\alpha{1-\alpha}}dx\)^{1-\alpha}\vsp
	\qquad\le C^1_\alpha(M_p(u_0)+1)^\alpha
	\(\dd\int^\9_0\!\!
	\frac{r\,dr}{(1+r^p)^{\frac\alpha{1-\alpha}}}\)^{1-\alpha}\!\!
	\le C^1_\alpha\,C^2_\alpha(M_p(u_0)+1)^\alpha,\earr\hspace*{-4mm}
\end{equation}
	where $0<C^2_\alpha<\9$ for $\alpha\in\(\frac2{2+p},1\)$. 
	
	Then, taking into account that $$I(u_0)=\int_{\rr^2}((u_0\log u_0)^+-(u\log u_0)^-)dx,$$ \eqref{e5.1} holds.
\end{proof}
 
\begin{remark}\label{r5.1} \rm In particular, it follows by \eqref{e5.2} that
	$$|I(u_0)|=\int_{\rr^2}((u_0\log u_0)^++(u_0\log u_0)^-)dx<\9$$
	if $u_0\in\calp_p(\rr^2)\cap L^{a+1}$ for some $a>0.$
	\end{remark}


\end{document}